\newtheorem{theorem}{Theorem}[section]
\newtheorem{lem}[theorem]{Lemma}
\newtheorem{thm}[theorem]{Theorem}
\newtheorem{exam}[theorem]{\rm\textsc{Example}}
\newcommand{\ideal}[1]{\ensuremath{\left\langle #1 \right\rangle}}
\DeclareMathOperator{\SL}{SL}
\DeclareMathOperator{\dia}{diag}
\DeclareMathOperator{\GL}{GL}
\DeclareMathOperator{\Tr}{Tr}
\DeclareMathOperator{\Jac}{Jac}
\DeclareMathOperator{\Ind}{Ind}
\DeclareMathOperator{\LT}{LT}
\newcommand{\A}{\mathcal{A}}
\newcommand{\B}{\mathcal{B}}
\newcommand{\J}{\mathfrak{J}}
\newcommand{\C}{\mathcal{C}}
\newcommand{\N}{\mathbb{N}}
\newcommand{\F}{\mathbb{F}}
\newcommand{\R}{\mathcal{R}}
\newcommand{\ra}{\longrightarrow}
\newcommand{\ol}{\overline}
\newcommand{\last}[1]{\begin{flalign*} && #1&&\Diamond \end{flalign*}}
\begin{document}
\title{Some four-dimensional orthogonal invariants}
\def\shorttitle{Some four-dimensional orthogonal invariants}

\author{Shan Ren}

\address{School of Mathematics and Statistics, Changchun University, Changchun 130022, China}
\email{rens734@nenu.edu.cn}

\author{Runxuan Zhang}
\address{Department of Mathematics and Information Technology, Concordia University of Edmonton, Edmonton, AB, Canada, T5B 4E4}
\email{runxuan.zhang@concordia.ab.ca}

\begin{abstract}
Let $p$ be an odd prime and $\F_p$ be the prime field of order $p$. Consider 
a $2$-dimensional orthogonal group $G$ over $\F_p$ acting on the standard representation $V$ and the dual space $V^*$.  We compute the invariant ring $\F_p[V\oplus V^*]^G$ via explicitly exhibiting a minimal generating set. Our method
provides an application of the $s$-invariants appearing in the covariant theory of finite groups. 
\end{abstract}

\date{\today}
\thanks{2020 \emph{Mathematics Subject Classification}. 13A50.}
\keywords{Invariants; one vector and one covector; orthogonal groups.}
\maketitle \baselineskip=16pt

\dottedcontents{section}[1.16cm]{}{1.8em}{5pt}
\dottedcontents{subsection}[2.00cm]{}{2.7em}{5pt}

\section{Introduction}
\setcounter{equation}{0}
\renewcommand{\theequation}
{1.\arabic{equation}}
\setcounter{theorem}{0}
\renewcommand{\thetheorem}
{1.\arabic{theorem}}

\noindent  Let $k$ be a field of any characteristic, $G$ a finite group and 
$V$ be a faithful finite-dimensional representation of $G$ over $k$. The action of $G$
on the dual space $V^*$ can be extended algebraically to a $k$-linear action of $G$ on the symmetric algebra $k[V]$ on $V^*$, i.e., elements of $G$ can be viewed as $k$-algebraic automorphisms of 
$k[V]$. Choosing a basis $\{x_1,\dots,x_n\}$ for $V^*$, we may identify $k[V]$ with the polynomial ring $k[x_1,\dots,x_n]$.
The invariant ring
$$k[V]^G:=\{f\in k[V]\mid \upsigma\cdot f= f,\textrm{ for all }\upsigma\in G\}$$
consisting of all polynomials fixed by all elements of $G$, is the main object of study in algebraic invariant theory; see for example, \cite{CW11, DK15}, and \cite{NS02} for general references to the invariant theory of finite groups. 

The invariant theory of classical groups over finite fields, originating from the classical Dickson invariants \cite{Dic11}, has  substantial applications in algebraic topology and commutative algebra, and has occupied a central position in modular invariant theory; see \cite{CSW24, CSW25} and \cite{CSW21} for the recent development in computing modular invariants of finite classical groups acting on their standard representations. Roughly speaking, classical groups over finite fields can be divided into three families: symplectic, unitary, and orthogonal groups; see \cite{Tay92} or \cite{Wan93}. Compared with the cases of finite symplectic and unitary groups, the invariant theory and geometry for finite orthogonal groups would be relatively more complicated. 

Let $O_n(q)$ be an $n$-dimensional orthogonal group over a finite field $\F_q$ acting on its standard representation $V$ and the dual representation $V^*$. Computing the invariant ring $\F_q[mV\oplus rV^*]^{O_n(q)}$ of $m$ vectors and $r$ covectors has been a difficult task in algebraic invariant theory even for the case $(m,r)=(1,0)$. 
Based on several earlier studies on the calculations of finite orthogonal invariants \cite{CK92, TW06, Chu01} and \cite{FF17}, 
Campbell, Shank, and Wehlau recently have made important progress in computing $\F_q[V]^{O_n(q)}$ in \cite[Theorem 4.6]{CSW24}, demonstrating that the invariant ring $\F_q[V]^{O_n(q)}$ is a complete intersection when $O_n(q)=O^+_n(\F_q)$ denotes the finite orthogonal group of plus type in odd characteristic.
More progress on the case $(m,r)=(1,1)$, i.e.,  modular invariants of one vector and one covector for other finite classical groups can be found in \cite{BK11, Che14}, and \cite{Ren24}.  Also, see \cite{CW19, CT19} and \cite{HZ20} for some calculations on modular invariant fields of several vectors and covectors.

In this article, we are interested in computing the invariants of one vector and one covector for finite two-dimensional orthogonal groups. More precisely, we will focus on
$$\F_p[V\oplus V^*]^{O_2(p)}$$
where $p$ denotes an odd prime and $\F_p$ denotes the prime field of order $p$. Note that two related works  
are available to show the difficulties of computing the invariants of finite 2-dimensional orthogonal groups:  \cite{Che18} for 
vector modular invariants of $O_2(q)$ in even characteristic and \cite{LM24} for separating vector invariants of $O_2(q)$ in odd characteristic. 

We denote by $\SL_n(\F_p)$ and $\GL_n(\F_p)$ the special linear group and the general linear group over $\F_p$, respectively. To articulate our main results, we suppose that $p>2$ and $Q\in\F_p[y_1,y_2]$ denotes a non-degenerate quadratic form over $\F_p$. Up to equivalence, it is well-known that there are two canonical quadratic forms 
\begin{equation}
\label{ }
Q_+=y_1y_2\textrm{ and }Q_-=y_1^2-\uplambda\cdot  y_2^2,
\end{equation}
where $\uplambda\in\F_p^\times$ denotes a non-square element; see \cite[Section 7.4]{NS02}. Equivalent quadratic forms correspond to isomorphic orthogonal groups. Thus two orthogonal groups, denoted as $O_2^+(\F_p)$ and $O_2^-(\F_p)$, are defined as the stabilizers of $Q_+$ and $Q_-$ in $\GL_2(\F_p)$, respectively.
Let $V$ be the standard 2-dimensional representation of $\GL_2(\F_p)$ with a basis $\{y_1,y_2\}$ and choose $\{x_1,x_2\}$ as a basis of $V^*$ dual to $\{y_1,y_2\}$. We identify $\F_p[V\oplus V^*]$ with $\F_p[x_1,x_2,y_1,y_2]$ and we would like to compute $\F_p[V\oplus V^*]^{O_2(p)}=\F_p[x_1,x_2,y_1,y_2]^{O_2(p)}$, where $O_2(p)=O_2^+(\F_p)$ and $O_2^-(\F_p)$.

To compute $\F_p[V\oplus V^*]^{O_2^+(\F_p)}$, we consider the $2$-dimensional special orthogonal group:
\begin{equation}
\label{ }
SO_{2}^+(\F_p):=O_2^+(\F_p)\cap \SL_2(\F_p).
\end{equation}
The first result computes $\F_p[V\oplus V^*]^{SO_2^+(\F_p)}$ as follows.

\begin{thm}\label{thm1}
The invariant ring $\F_p[V\oplus V^*]^{SO_{2}^+(\F_p)}$ is generated by
\begin{equation}
\label{eq-thm1}
\A:=\left\{x_1x_2, y_1y_2, x_1y_1, x_2y_2, x_1^{p-1-i}y_2^i, x_2^{p-1-i}y_1^i\mid 0\leqslant i \leqslant p-1\right\}.
\end{equation}
\end{thm}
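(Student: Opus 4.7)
The plan is to identify $SO_2^+(\F_p)$ very concretely and then reduce the assertion to a combinatorial statement about the semigroup of invariant monomials. First I would observe that any element of $O_2^+(\F_p)$ preserving $Q_+=y_1y_2$ is either diagonal, of the form $\sigma_t=\dia(t,t^{-1})$ with $t\in\F_p^\times$, or anti-diagonal $\sm{0}{t}{t^{-1}}{0}$; the former have determinant $1$, the latter $-1$. Hence $SO_2^+(\F_p)=\{\sigma_t:t\in\F_p^\times\}$ is cyclic of order $p-1$, and a dual-basis computation yields $\sigma_t\cdot x_1=t^{-1}x_1$ and $\sigma_t\cdot x_2=tx_2$. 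Since the action is diagonal on the four generators of $\F_p[x_1,x_2,y_1,y_2]$, the invariant ring is spanned by invariant monomials, and $x_1^{\alpha}x_2^{\beta}y_1^{\gamma}y_2^{\delta}$ is invariant exactly when $-\alpha+\beta+\gamma-\delta\equiv 0\pmod{p-1}$.

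Verifying that each element of $\A$ satisfies this congruence is a direct weight check. The heart of the argument is then to show that an arbitrary invariant monomial $M=x_1^{\alpha}x_2^{\beta}y_1^{\gamma}y_2^{\delta}$ lies in the subring generated by $\A$. Writing $-\alpha+\beta+\gamma-\delta=(p-1)k$ for some $k\in\Z$, I would split on $|k|$.

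The base case $k=0$, where $\alpha+\delta=\beta+\gamma$, is essentially a bipartite matching: viewing $x_1,y_2$ as ``negative'' letters and $x_2,y_1$ as ``positive'' letters, each of the four degree-two generators $x_1x_2$, $y_1y_2$, $x_1y_1$, $x_2y_2$ pairs exactly one type of negative with one type of positive, and all four possible negative--positive combinations occur. A short induction on total degree, extracting one such pair at each step (possible whenever $\alpha+\delta=\beta+\gamma\geq 1$, since then at least one negative and one positive variable are present), writes $M$ as a product of those four generators. For $k\leq -1$ we have $\alpha+\delta=\beta+\gamma+(p-1)|k|\geq p-1$, and one may choose $i$ with $\max(0,p-1-\alpha)\leq i\leq \min(p-1,\delta)$ so that $x_1^{p-1-i}y_2^i\in\A$ divides $M$; the quotient then has weight $(p-1)(k+1)$, strictly smaller in absolute value. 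The symmetric argument with $x_2^{p-1-i}y_1^i$ treats $k\geq 1$. Iterating reduces to the base case.

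The only point that requires a short verification is the existence of the index $i$ in the reduction step, which follows from the elementary observation that $\alpha+\delta\geq p-1$ forces $\min(p-1,\delta)\geq \max(0,p-1-\alpha)$. Beyond that, the argument amounts to a clean decomposition of the semigroup $\{(\alpha,\beta,\gamma,\delta)\in\N^4:-\alpha+\beta+\gamma-\delta\equiv 0\pmod{p-1}\}$ into a weight-$0$ sub-semigroup generated by the four pair monomials, together with the $2p$ ``layer'' generators that lift the weights $\pm(p-1)$. I expect the main organizational obstacle to be keeping the bipartite pairing in the $k=0$ case clean enough to avoid case-by-case clutter.
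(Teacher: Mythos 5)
Your proposal is correct and follows essentially the same route as the paper: identify $SO_2^+(\F_p)$ as the diagonal torus, note that the diagonal action reduces everything to invariant monomials characterized by the congruence on exponents mod $p-1$, and then decompose an arbitrary invariant monomial by inductively extracting factors from $\A$. The only difference is organizational --- you stratify by the weight level $k$ and reduce $|k|$ by pulling out the $x_1^{p-1-i}y_2^i$ or $x_2^{p-1-i}y_1^i$ generators before handling the weight-zero case with the four quadratic generators, whereas the paper runs a single induction on degree (factoring out $x_1^{p-1}$, etc.\ when an exponent reaches $p-1$, and otherwise casing on which quadratic monomial divides) --- but the substance is the same.
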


Together with the relative Reynolds operator, we may use Theorem \ref{thm1} to prove the following second result that 
computes $\F_p[V\oplus V^*]^{O_2^+(\F_p)}$.

\begin{thm}\label{thm2}
The invariant ring $\F_p[V\oplus V^*]^{O_2^+(\F_p)}$ is generated by
\begin{equation}
\label{eq-thm2}
\B:=\left\{x_1x_2, y_1y_2, x_1y_1+x_2y_2, x_1^{p-1-i}y_2^i+x_2^{p-1-i}y_1^i\mid 0\leqslant i\leqslant p-1\right\}.
\end{equation}
\end{thm}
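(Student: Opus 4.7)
My plan is to derive Theorem \ref{thm2} from Theorem \ref{thm1} via the relative Reynolds operator for the index-$2$ inclusion $SO_2^+(\F_p)\subset O_2^+(\F_p)$. The quotient is generated by an involution $\sigma$ — take, e.g., the matrix that swaps $y_1\leftrightarrow y_2$ and dually $x_1\leftrightarrow x_2$ — and since $|O_2^+(\F_p)/SO_2^+(\F_p)|=2$ is invertible in $\F_p$, the averaging $R(g):=\tfrac{1}{2}(g+\sigma g)$ is a surjective projection from $\F_p[V\oplus V^*]^{SO_2^+(\F_p)}$ onto $\F_p[V\oplus V^*]^{O_2^+(\F_p)}$ that is the identity on its image. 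The inclusion $\F_p[\B]\subseteq\F_p[V\oplus V^*]^{O_2^+(\F_p)}$ is immediate: $x_1x_2$ and $y_1y_2$ are the $O_2^+$-defining quadratic forms on $V^*$ and $V$, and the remaining members of $\B$ are visibly $\sigma$-symmetrizations of elements of $\A$ (hence $O_2^+$-invariant by Theorem \ref{thm1} together with $\sigma$-symmetry). For the reverse inclusion, any $f\in\F_p[V\oplus V^*]^{O_2^+(\F_p)}$ satisfies $f=R(f)$ and, by Theorem \ref{thm1}, is a polynomial in the generators of $\A$; it therefore suffices to show $R(m)\in\F_p[\B]$ for every monomial $m$ in those generators.

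Under $\sigma$, the generators of $\A$ split into the two fixed elements $\alpha:=x_1x_2$, $\beta:=y_1y_2$, and the $\sigma$-orbits of size $2$ whose pair-sums make up the rest of $\B$. Writing $m=\alpha^a\beta^b\cdot m'$ with $m'$ a monomial in the non-fixed generators, $R(m)=\alpha^a\beta^b R(m')$, so I may focus on $R(m')$. The core computation is a catalogue of ring identities in $\F_p[V\oplus V^*]$ — e.g.
\begin{equation*}
(x_1y_1)(x_2y_2)=\alpha\beta,\quad (x_1^{p-1-i}y_2^i)(x_2^{p-1-i}y_1^i)=\alpha^{p-1-i}\beta^i,
\end{equation*}
\begin{equation*}
(x_1y_1)(x_1^{p-1-i}y_2^i)=\beta\cdot x_1^{p-i}y_2^{i-1}\ (i\ge 1),\quad (x_1y_1)(x_2^{p-1-i}y_1^i)=\alpha\cdot x_2^{p-2-i}y_1^{i+1}\ (i\le p-2),
\end{equation*}
and their $\sigma$-conjugates — that collapse each product of two non-fixed generators into an $\alpha,\beta$-monomial times a single generator from $\A$. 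Iterating, every $m'$ reduces to $\alpha^u\beta^v\cdot h$ with $h$ either a constant or a single non-fixed generator; applying $R$ then yields either $\alpha^u\beta^v$ or $\alpha^u\beta^v$ times a pair-sum in $\B$. A supplementary ingredient handles the mixed cross-orbit sums that arise: the identity $\delta_i\delta'_j+\delta'_i\delta_j=\alpha^{p-1-\max(i,j)}\beta^{\min(i,j)}\bigl((x_1y_1)^{|i-j|}+(x_2y_2)^{|i-j|}\bigr)$ (with $\delta_i:=x_1^{p-1-i}y_2^i$, $\delta'_i:=x_2^{p-1-i}y_1^i$), combined with Newton's identities expressing the power sum $(x_1y_1)^k+(x_2y_2)^k$ as a polynomial in $c:=x_1y_1+x_2y_2$ and $\alpha\beta$ (valid for $0\le k<p$), keeps such sums inside $\F_p[\B]$.

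The main obstacle I foresee is the two \emph{boundary cases} — $i=0$ in the third identity and $i=p-1$ in the fourth above — where the would-be output $x_1^py_1$ or $x_1y_1^p$ escapes $\A$ (one exponent reaches $p$). To close the induction in these cases I would invoke the closed symmetrized identities
\begin{equation*}
x_1^py_1+x_2^py_2=(x_1y_1+x_2y_2)(x_1^{p-1}+x_2^{p-1})-\alpha(x_1^{p-2}y_2+x_2^{p-2}y_1),
\end{equation*}
\begin{equation*}
x_1y_1^p+x_2y_2^p=(x_1y_1+x_2y_2)(y_1^{p-1}+y_2^{p-1})-\beta(x_1y_2^{p-2}+x_2y_1^{p-2}),
\end{equation*}
each verifiable by direct expansion and each displaying the left-hand side as a polynomial in $\B$. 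With these boundary identities in hand, a double induction on total degree and on the number of $\sigma$-orbits involved in $m'$ terminates the reduction and yields the reverse inclusion.
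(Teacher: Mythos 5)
Your overall strategy is the same as the paper's: apply the relative Reynolds operator $R=\tfrac12(1+\sigma)$ to Theorem \ref{thm1} and reduce to showing that the symmetrization of every monomial in the generators $\A$ lies in the algebra generated by $\B$; your individual identities (including the two boundary identities and the cross-orbit identity, all of which I checked) are correct. The gap is in the step that is supposed to drive the whole reduction. It is not true that ``each product of two non-fixed generators'' collapses to an $\alpha,\beta$-monomial times a single generator of $\A$: your catalogue only treats products of a degree-two generator with a degree-$(p-1)$ generator, or of the two members of a $\sigma$-orbit, and it does nothing for products of two generators from the same $\sigma$-half, e.g.\ $(x_1y_1)^2=x_1^2y_1^2$, $(x_1^{p-1})(x_1^{p-2}y_2)=x_1^{2p-3}y_2$, or $(x_1^{p-1})^2=x_1^{2p-2}$, none of which is $\alpha^u\beta^v$ times an element of $\A$. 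Hence the assertion ``iterating, every $m'$ reduces to $\alpha^u\beta^v\cdot h$ with $h$ a constant or a single non-fixed generator'' fails, and with it the conclusion that $R(m')$ is $\alpha^u\beta^v$ or $\alpha^u\beta^v$ times a pair-sum. Your two patches cover only special shapes: the Newton-type recurrence handles $R\bigl((x_1y_1)^k\bigr)$ (and in fact for all $k$, since $a^k+b^k=(a+b)(a^{k-1}+b^{k-1})-ab(a^{k-2}+b^{k-2})$ needs no division, so the restriction $k<p$ is unnecessary), and the cross-orbit identity handles $\delta_i\delta_j'+\delta_i'\delta_j$ with $\delta_i:=x_1^{p-1-i}y_2^i$, $\delta_i':=x_2^{p-1-i}y_1^i$; neither addresses $R(\delta_i\delta_j)$ with both factors unprimed, nor mixed monomials such as $(x_1y_1)^{r}(x_1^{p-1})^{a}$ with $r,a>0$. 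Note also that the boundary identities, being statements about symmetrized sums, cannot be applied to an escape like $(x_1y_1)\delta_0=x_1^py_1$ occurring \emph{inside} an unsymmetrized monomial, so they do not by themselves ``close the induction'' there.

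These untreated configurations are exactly where the real work lies, and they occupy most of the paper's proof. The workable repair is to keep everything symmetrized: write $m'+\sigma m'$ as a product of pair-sums from $\B$ minus cross terms, and show each cross term is divisible by $x_1x_2$ or $y_1y_2$ (or, after such a factor is extracted, is a scalar times the symmetrization of a monomial of strictly smaller degree), so that induction on degree applies. For $\delta_i\delta_j$ (both unprimed) this is $(\delta_i+\delta_i')(\delta_j+\delta_j')-(\delta_i\delta_j'+\delta_i'\delta_j)$ combined with your cross-orbit identity; for $(x_1y_1)^{r}(x_1^{p-1})^{a}$ one needs the additional observation that $(x_1y_1)^{r}(x_2^{p-1})^{a}+(x_2y_2)^{r}(x_1^{p-1})^{a}$ equals $(x_1x_2)^{\min(r,(p-1)a)}$ times the symmetrization of a lower-degree monomial (this is the paper's $\ell=|r-(p-1)a|$ device in its Subcase 2), and similarly for pure powers $(x_1^{p-1})^{a}$ (the paper's Case 3). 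Your closing appeal to a double induction asserts termination but does not supply the induction step for precisely these monomials, so as written the reverse inclusion is not established; with the expansion-plus-induction mechanism spelled out, your argument closes and then coincides in substance with the paper's.
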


The structure of $\F_p[V\oplus V^*]^{O_2^-(\F_p)}$ is more complicated than that of $\F_p[V\oplus V^*]^{O_2^+(\F_p)}$.
Magma calculation \cite{BCP97} suggests that the cardinality of a generating set of $\F_p[V\oplus V^*]^{O_2^-(\F_p)}$ would become larger and larger as $p$ increases. This also means that finding a pattern revealing 
generating relations of $\F_p[V\oplus V^*]^{O_2^-(\F_p)}$ might be impossible. 
We use the Jacobian criterion appearing in the covariant theory of finite groups (see \cite[Theorem 3]{BC10}), compute the corresponding $s$-invariant, and eventually find a free basis of $\F_p[V\oplus V^*]^{O_2^-(\F_p)}$ over
$\F_p[V\oplus V^*]^{O_2^-(\F_p)\times O_2^-(\F_p) }$ in Theorem \ref{thm4}. By this free basis, we may obtain the following third result, which is a direct consequence of Theorem \ref{thm4}.

\begin{thm}\label{thm3}
The invariant ring $\F_p[V\oplus V^*]^{O_2^-(\F_p)}$ is generated by
\begin{equation}
\label{eq-thm3}
\C:=\left\{\begin{aligned}
&x_1^2-\lambda x_2^2, x_1^{p+1}-\lambda x_2^{p+1}, y_1^2 - \lambda^{-1} y_2^2, y_1^{p+1}-\lambda^{-1} y_2^{p+1},\\
&x_1y_1+x_2 y_2, \Tr(x_1^{p+1-i}y_1^i) \mid 1\leqslant i\leqslant p
\end{aligned}
\right\}
\end{equation}
where $\uplambda=-1$ if $p \equiv 3 \mod 4$; and $\uplambda$ generates $\mathbb{F}_p^{\times}$ if $p \equiv 1 \mod 4$.
\end{thm}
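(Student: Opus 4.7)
The plan is to derive Theorem \ref{thm3} directly from Theorem \ref{thm4}. Granting Theorem \ref{thm4}, which produces an explicit free basis $\mathcal{F}$ of $\F_p[V\oplus V^*]^{O_2^-(\F_p)}$ as a module over the subring
$$R:=\F_p[V\oplus V^*]^{O_2^-(\F_p)\times O_2^-(\F_p)},$$
the proof becomes a two-step algebra-generation argument: identify $\F_p$-algebra generators of $R$, and then combine them with $\mathcal{F}$ to recover $\C$.

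First I will identify a generating set for $R$. Because $O_2^-(\F_p)\times O_2^-(\F_p)$ acts on $V$ and on $V^*$ separately, we have a tensor decomposition
$$R = \F_p[V]^{O_2^-(\F_p)}\otimes_{\F_p} \F_p[V^*]^{O_2^-(\F_p)}.$$
Each tensor factor is a classical rank-two polynomial invariant ring of an orthogonal reflection group of order $2(p+1)$ in the relevant characteristic, generated by the defining quadratic form (degree $2$) and a Dickson-type top invariant (degree $p+1$). The four polynomials $x_1^2-\lambda x_2^2$, $x_1^{p+1}-\lambda x_2^{p+1}$, $y_1^2-\lambda^{-1}y_2^2$, and $y_1^{p+1}-\lambda^{-1}y_2^{p+1}$ appearing in $\C$ are precisely these generators, where the passage from $\lambda$ to $\lambda^{-1}$ records the change of defining form under the inverse-transpose action on $V^*$.

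Second, since the decomposition $\F_p[V\oplus V^*]^{O_2^-(\F_p)} = \bigoplus_{f\in \mathcal{F}} R\cdot f$ is $R$-free, an $\F_p$-algebra generating set for the full invariant ring is obtained by combining any generators of $R$ with the elements of $\mathcal{F}$. I will verify that, after possible rescaling and subtracting off elements of $R$, the nontrivial members of $\mathcal{F}$ coincide with the mixed invariants listed in $\C$: the natural pairing $x_1 y_1 + x_2 y_2$ (manifestly fixed since $O_2^-(\F_p)$ acts on $V^*$ by the inverse transpose of its action on $V$) together with the trace polynomials $\Tr(x_1^{p+1-i}y_1^i)$ for $1 \leqslant i \leqslant p$ (manifestly fixed because $\Tr$ is defined by summation over an $O_2^-(\F_p)$-orbit). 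This rewriting is the content of the ``direct consequence'' passage from Theorem \ref{thm4} to Theorem \ref{thm3}.

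The main obstacle I anticipate is exactly this bookkeeping in the second step: the free basis $\mathcal{F}$ produced by the Jacobian/$s$-invariant method of Theorem \ref{thm4} is not a priori written in the compact trace form of $\C$, so each element of $\mathcal{F}$ must be matched, modulo the $R$-module structure, against $\F_p$-linear combinations of $x_1y_1+x_2y_2$ and $\Tr(x_1^{p+1-i}y_1^i)$. Once this identification is established, the union of the four $R$-generators with the pairing and the $p$ trace polynomials generates $\F_p[V\oplus V^*]^{O_2^-(\F_p)}$, completing the proof of Theorem \ref{thm3}.
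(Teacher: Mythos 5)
Your proposal is correct and follows essentially the same route as the paper: Theorem \ref{thm3} is read off from the free $\F_p[V\oplus V^*]^G$-module basis of Theorem \ref{thm4} together with the four polynomial generators of $\F_p[V\oplus V^*]^G$ (which the paper obtains via the homogeneous-system-of-parameters/degree-product criterion rather than your tensor-product decomposition, an immaterial difference). The bookkeeping obstacle you anticipate does not arise, since the basis in Theorem \ref{thm4} is already $\{(x_1y_1+x_2y_2)^i \mid 0\leqslant i\leqslant p+1\}\cup\{\Tr(x_1^{p+1-i}y_1^i)\mid 1\leqslant i\leqslant p\}$, so the subalgebra generated by $\C$ visibly contains it.
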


Note that the advantage of our method in Theorem \ref{thm4} is that we avoid seeking generating relations of $\F_p[V\oplus V^*]^{O_2^-(\F_p)}$. In some traditional methods, determining some generating relations is difficult (even for some 4-dimensional invariants) but very helpful in computing generating sets or free bases of an invariant ring; see for example, \cite{CH96} and \cite{Che21}.

We close this section by presenting several examples for small prime $p$.

\begin{exam}{\rm
(1) Consider $p=3$. The invariant ring $\F_3[V\oplus V^*]^{O_2^+(\F_3)}$ is a complete intersection, generated by the following invariants:
\begin{eqnarray*}
f_1:=x_1x_2, f_2:=x_1^2+x_2^2, f_3:=y_1y_2, f_4:=y_1^2+y_2^2, u:=x_1y_1+x_2y_2, v:=x_1y_2+x_2y_1
\end{eqnarray*} 
subject to the two relations: $f_1\cdot f_4+f_2\cdot f_3-u\cdot v=0$ and 
$f_1\cdot f_3+f_2\cdot f_4-u^2-v^2=0.$

(2) Suppose that $p=5$. The invariant ring $\F_5[V\oplus V^*]^{O_2^-(\F_5)}$ is generated by the primary invariants $$\left\{x_1^2+2\cdot x_2^2, x_1^6+2\cdot x_2^6, y_1^2+3\cdot y_2^2, y_1^6+3\cdot y_2^6\right\}$$
and six secondary invariants 
$\left\{x_1y_1+x_2 y_2, \Tr(x_1^5y_1), \Tr(x_1^4y_1^2), \Tr(x_1^3y_1^3), \Tr(x_1^2y_1^4), \Tr(x_1y_1^5)\right\}.$

(3) If $p=7$, then $\F_7[V\oplus V^*]^{O_2^-(\F_7)}$ is generated by the
four primary invariants $$\left\{x_1^2+x_2^2, x_1^8+x_2^8, y_1^2+y_2^2, y_1^8+y_2^8\right\}$$
together with the eight secondary invariants
\last{\left\{x_1y_1+x_2y_2, \Tr(x_1^7y_1), \Tr(x_1^6y_1^2), \Tr(x_1^5y_1^3), \Tr(x_1^4y_1^4), \Tr(x_1^3y_1^5),\Tr(x_1^2y_1^6), \Tr(x_1y_1^7)\right\}.}
}\end{exam}

\section{$O_2^+(\F_p)$-Invariants} \label{sec2}
\setcounter{equation}{0}
\renewcommand{\theequation}
{2.\arabic{equation}}
\setcounter{theorem}{0}
\renewcommand{\thetheorem}
{2.\arabic{theorem}}

\noindent The main purpose of this section is to prove Theorem \ref{thm2}, calculating the invariants of one vector and one covector of $O_2^+(\F_p)$. Let us begin by recalling some fundamentals about $O_2^+(\F_p)$ and its invariants. 
Note that $p\geqslant 3$ and $|O_2^+(\F_p)|=2(p-1)$. Thus the standard representation $V$ is nonmodular, and it is well-known that $O_2^+(\F_p)$ can be generated by the following two matrices
\begin{equation}
\label{ }
\upxi:=\begin{pmatrix}
0 & 1  \\
1 & 0
\end{pmatrix},\quad
\uptau_a:=\begin{pmatrix}
a & 0  \\
0 & a^{-1}
\end{pmatrix},
\end{equation}
where $\F_p^{\times}=\ideal{a}$. The special orthogonal group $SO_{2}^+(\F_p)$  is  generated by
$\uptau_a$ and is of order $p-1$.
It follows from \cite[Example 6]{NS02} that 
\begin{equation}
\label{ }
\F_p[V]^{SO_{2}^+(\F_p)}=\F_p[x_1,x_2]^{SO_{2}^+(\F_p)}=\F_p[x_1x_2, x_1^{p-1}, x_2^{p-1}]
\end{equation}
is a hypersurface, subject to the unique relation:  
\begin{equation}
\label{ }
(x_1x_2)^{p-1}=x_1^{p-1}x_2^{p-1}.
\end{equation}

Note that the resulting matrix of each element $g\in O_2^+(\F_p)$ acting on $V^*$ is the inverse of the transpose of $g$.
Thus the resulting matrix of $\upxi$ on $V\oplus V^*$ is
\begin{equation}
\label{ }
\begin{pmatrix}
    \upxi  &  0  \\
    0  &  \upxi
\end{pmatrix}_{4\times 4}
\end{equation}
and the resulting matrix of $\uptau_a$ on $V\oplus V^*$ is $\dia\left\{a,a^{-1},a^{-1},a\right\}.$
Hence, the action of $O_2^+(\F_p)$ on $\F_p[V\oplus V^*]$ can be given by
\begin{equation}
\label{ }
\uptau_a(x_1)=a\cdot x_1,~~ \uptau_a(x_2)=a^{-1}\cdot x_2, ~~\uptau_a(y_1)=a^{-1}\cdot y_1,~~ \uptau_a(y_2)=a\cdot y_2.
\end{equation}
Write $A$ for the $\F_p$-algebra generated by the set $\A$ in (\ref{eq-thm1}).  A direct verification shows that each element in $\A$ is fixed by $\uptau_a$, thus $A\subseteq \F_p[V\oplus V^*]^{SO_2^+(\F_p)}$.

Now we are ready to prove Theorem \ref{thm1}.

\begin{proof}[Proof of Theorem \ref{thm1}]
It suffices to show the \textit{claim} that every element in $\F_p[V\oplus V^*]^{SO_2^+(\F_p)}$ must be in $A$.
We first note that up to a nonzero scalar, $\uptau_a$ fixes each monomial $x_1^ux_2^vy_1^sy_2^t\in\F_p[V\oplus V^*]$, where $u,v,s,t\in\N$. Thus $\F_p[V\oplus V^*]^{SO_2^+(\F_p)}$ can be generated by finitely many monomials. 
We may consider an arbitrary monomial $f=x_1^ux_2^vy_1^sy_2^t$ in
$\F_p[V\oplus V^*]^{SO_2^+(\F_p)}$. Then  
$$x_1^ux_2^vy_1^sy_2^t=f=\uptau_a\cdot f=a^{u+t-v-s}\cdot x_1^ux_2^vy_1^sy_2^t$$
which implies that $a^{u+t-v-s}=1$ and thus $p-1$ divides $u+t-v-s$.

We use induction on the degree of $f$ to prove the claim above. Note that
$\deg(f)=u+t+v+s$.  Clearly, there are no linear invariant polynomials in $\F_p[V\oplus V^*]^{SO_2^+(\F_p)}$.

Suppose that $\deg(f)=2$. According to the partition of $2$: $(1,1)$ and $(2,0)$, there are $10$ possibilities for values of the integer vector $(u,v,s,t)$:
\begin{eqnarray*}
&\{(1,1,0,0),(1,0,1,0),(1,0,0,1),(0,1,1,0),(0,1,0,1),(0,0,1,1)\}&\\
&\{(2,0,0,0), (0,2,0,0),(0,0,2,0),(0,0,0,2)\}.&
\end{eqnarray*}
Note that in the first row above, the first two vectors and the last two vectors give us four invariant monomials:
$\{x_1x_2, y_1y_2, x_1y_1, x_2y_2\}$. The middle two vectors in the first row and the four vectors in the second row above would not produce  invariant monomials unless $p=3$. When $p=3$, the six vectors give rise to the following six invariant monomials:
$$\left\{x_1y_2,x_2y_1,x_1^2,x_2^2,y_1^2,y_2^2\right\}$$
respectively, which are also contained in $\A$.  Thus the claim holds for the case $\deg(f)=2$.

We assume that $\deg(f)\geqslant 3$. If there exists at least one of $\{u,v,s,t\}$ is greater than or equal to $p-1$, without loss of generality, say $u$, then $f=x_1^{p-1}\cdot f'$. As $\deg(f')<\deg(f)$, the induction hypothesis implies that $f'$ can be algebraically expressed  by elements of $\A$. Thus $f\in A$.
Hence, we may assume that $0\leqslant u,v,s,t<p-1$. This also means that
 at least two variables of $\{x_1,x_2,y_1,y_2\}$ are involved in $f$.  Thus, we need to discuss six subcases. 
 
Suppose that $x_1x_2$ divides $f$. We write $f=(x_1x_2)\cdot f'$. Note that $x_1x_2\in\A$, thus the induction hypothesis implies that $f\in A$. Similarly, if $y_1y_2$ (or $x_1y_1, x_2y_2$) divides $f$, as these monomials of degree $2$ are in $\A$, then $f\in A$ by the induction hypothesis.

The remaining two subcases are: $x_1y_2$ divides $f$ or $x_2y_1$ divides $f$. The proofs are similar. In fact, 
assume that $x_1y_2$ divides $f$. If one of $\{v,s\}$ is nonzero, then  
one of $\{x_1x_2, x_1y_1,y_1y_2,x_2y_2\}$ must divide $f$. We have seen that $f\in A$ in the previous paragraph. Thus we only consider the case where $v=s=0$, i.e., $f=x_1^uy_2^t.$
Note that $0< u,t<p-1$, thus $u+t<2(p-1)$. As $p-1$ divides $u+t=\deg(f)\geqslant 3$, it follows that
$u+t=p-1.$  Hence, 
$$f=x_1^{p-1-i}y_2^i$$
for some $i\in\{1,2,\dots,p-1\}$. Similarly, if $x_2y_1$ divides $f$, then
$$f=x_2^{p-1-i}y_1^i$$
for some $i\in\{1,2,\dots,p-1\}$.  Therefore, the claim follows and $\F_p[V\oplus V^*]^{SO_2^+(\F_p)}=A$.
\end{proof}

Recall that $[O_2^+(\F_p): SO_2^+(\F_p)]=2$, thus 
$$\left\{I_2=\begin{pmatrix}
      1& 0   \\
      0&1  
\end{pmatrix}, \upxi=\begin{pmatrix}
    0  & 1   \\
    1  & 0 
\end{pmatrix}\right\}$$
can be chosen as the set of representatives for the left coset of $O_2^+(\F_p)$ over $SO_2^+(\F_p)$. To prove Theorem \ref{thm2}, we will use
Theorem \ref{thm1} and the relative Reynolds operator: 
\begin{equation} \label{R+}
\begin{aligned}
\R^+: & ~~~~\F_p[V\oplus V^*]^{SO_2^+(\F_p)}\ra\F_p[V\oplus V^*]^{O_2^+(\F_p)},\\
&~~~~f\mapsto\frac{1}{[O_2^+(\F_p): SO_2^+(\F_p)]}\sum_{\bar{g}\in O_2^+(\F_p)/SO_2^+(\F_p)}\bar{g}\cdot f
\end{aligned}
\end{equation}
where each $f\in \F_p[V\oplus V^*]^{SO_2^+(\F_p)}$ maps to $\frac{1}{2}(f+\upxi \cdot f).$

Together with Theorem \ref{thm1}, we will apply the relative Reynolds operator $\R^+$ and \cite[Lemma 3.1]{Che18} to give a proof of Theorem \ref{thm2}. Recall that $\F_p[V]^{O_2^+(\F_p)}=\F_p[x_1x_2, x_1^{p-1}+x_2^{p-1}]$ is a polynomial ring; see \cite[Example 5]{NS02}.

\begin{proof}[Proof of Theorem \ref{thm2}]
Let $B$ be the $\F_p$-subalgebra of $\F_p[V\oplus V^*]^{O_2^+(\F_p)}$, generated by the set $\B$ in (\ref{eq-thm2}). The natural embedding 
$\F_p[V\oplus V^*]^{O_2^+(\F_p)}\subseteq \F_p[V\oplus V^*]^{SO_2^+(\F_p)}$ also allows us to regard $B$ as an $\F_p$-subalgebra of $\F_p[V\oplus V^*]^{SO_2^+(\F_p)}$. 
Thus, $\R^+$  is a surjective homomorphism of $\F_p[V\oplus V^*]^{O_2^+(\F_p)}$-modules. 

Note that the four elements $x_1x_2,y_1y_2,x_1^{p-1}+x_2^{p-1}, y_1^{p-1}+y_2^{p-1}$ in $\B$ form a homogeneous system of parameters (hsop for short). In fact, by \cite[Lemma 2.6.3]{CW11}, it suffices to show that the variety determined by these four elements over is algebraic closure $\ol{\F}_p$ is $\{0\}$. Assume that $v=(a_1,a_2,b_1,b_2)$ be an arbitrary point in the variety. Then 
$$a_1a_2=b_1b_2=a_1^{p-1}+a_2^{p-1}=b_1^{p-1}+b_2^{p-1}=0.$$
In particular, $a_2^{p-1}=-a_1^{p-1}.$ Thus $0=(a_1a_2)^{p-1}=a_1^{p-1}a_2^{p-1}=(a_1^{p-1})(-a_1^{p-1})=-a_1^{2p-2}$. This implies that $a_1=0$, as $a_1\in \ol{\F}_p$.  Moreover, $a_2^{p-1}=0$ and $a_2=0$ as well. Similarly, using 
$b_1b_2=0$ and $b_1^{p-1}+b_2^{p-1}=0$ obtains that $b_1=b_2=0$. Hence, $v=0$. This proves that $\B$ contains an hsop and $\F_p[V\oplus V^*]^{SO_2^+(\F_p)}$ is integral over $B$.

Therefore, we may write 
$$\F_p[V\oplus V^*]^{SO_2^+(\F_p)}=B+\sum_{\updelta\in\Delta} \updelta\cdot B$$
where $\Delta \cup\{1\}$ is a homogeneous generating set of $\F_p[V\oplus V^*]^{SO_2^+(\F_p)}$ as a $B$-module. Specifically, by Theorem \ref{thm1},  we may choose
$$\Delta:=\left\{(x_1y_1)^{r_1}(x_2y_2)^{r_2}\cdot \prod_{i=0}^{p-1}(x_1^{p-1-i}y_2^i)^{r_{i,3}}(x_2^{p-1-i}y_1^i)^{r_{i,4}}\mid  0\leqslant r_1,r_2,r_{i,3},r_{i,4}\leqslant m\in\N^+\right\}$$
for some positive integer $m$. Note that we assume that $1\notin\Delta$.

Let $\J$ denote the ideal generated by $\B$ in $\F_p[V\oplus V^*]^{SO_2^+(\F_p)}$.  By \cite[Lemma 3.1]{Che18}, it suffices to prove that $\R^+(\updelta)\in\J$ for all $\updelta\in\Delta$. Suppose that
$$\updelta:=(x_1y_1)^{r_1}(x_2y_2)^{r_2}\cdot \prod_{i=0}^{p-1}(x_1^{p-1-i}y_2^i)^{r_{i,3}}(x_2^{p-1-i}y_1^i)^{r_{i,4}}$$
for some $r_1,r_2,r_{i,3},r_{i,4}\in\N$. Then $\R^+(\updelta)=\frac{1}{2}(\updelta+\upxi \cdot \updelta)$, which can be expressed as 
{\small $$
\frac{1}{2}\left((x_1y_1)^{r_1}(x_2y_2)^{r_2}\prod_{i=0}^{p-1}(x_1^{p-1-i}y_2^i)^{r_{i,3}}(x_2^{p-1-i}y_1^i)^{r_{i,4}}+(x_2y_2)^{r_1}(x_1y_1)^{r_2} \prod_{i=0}^{p-1}(x_2^{p-1-i}y_1^i)^{r_{i,3}}(x_1^{p-1-i}y_2^i)^{r_{i,4}}\right).
$$}
We use induction on the degree of $\updelta$ to prove that $\R^+(\updelta)\in\J.$
Note that $\deg(\updelta)\geqslant 2$.  

Suppose that $\deg(\updelta)=2$ and $p>3$. Then
$\updelta$ is either equal to $x_1y_1$ or $x_2y_2$. If $\updelta=x_1y_1$, then
$\R^+(\updelta)=\frac{1}{2}(x_1y_1+x_2y_2)\in \J$; similarly, if 
$\updelta=x_2y_2$, then
$\R^+(\updelta)=\frac{1}{2}(x_2y_2+x_1y_1)\in \J$ as well.  Moreover, if $p=3$, then
$\updelta$ must be one of 
$$\left\{x_1y_1,x_2y_2, x_1^{2-i}y_2^i, x_2^{2-i}y_1^i\mid 0\leqslant i\leqslant 2\right\}.$$
Clearly, $\R^+(\updelta)\in\J$, when $\updelta\in\{x_1y_1,x_2y_2\}$. We observe that
$$\R^+(x_1^{2-i}y_2^i)=\R^+(x_2^{2-i}y_1^i)=\frac{1}{2}\left(x_1^{2-i}y_2^i+x_2^{2-i}y_1^i\right)\in\J.$$
Thus, the statement holds for the case of degree $2$.

Now we may suppose that $\deg(\updelta)\geqslant 3$. Our arguments can be separated into the following three cases:
\textsc{Case 1:} Both $r_1$ and $r_2$ are positive. Then
$$
\R^+(\updelta)=(x_1x_2)\cdot \R^+(\updelta')\in \J
$$
because $\updelta'\in\Delta$, $\deg(\updelta')<\deg(\updelta)$, and we may use the induction hypothesis. 

\textsc{Case 2:} One of $\{r_1,r_2\}$ is positive and the other one is zero. Without loss of generality, we may assume that
$r_1>0$ and $r_2=0$. If there are two positive numbers in $\{r_{0,3},\dots, r_{p-1,3}, r_{0,4},\dots, r_{p-1,4}\}$, then
$\R^+(\updelta)$ can be written as either $(x_1x_2)\cdot \R^+(\updelta')$ or 
$(y_1y_2)\cdot \R^+(\updelta')$ for some $\updelta'\in\Delta$.  Thus, the induction hypothesis implies that
$\R^+(\updelta)\in \J$. This means that we only need to consider the following subcases:

\textsc{Subcase 2.1:} all the numbers in $\{r_{0,3},\dots, r_{p-1,3}, r_{0,4},\dots, r_{p-1,4}\}$ are zero. Thus
$\updelta=(x_1y_1)^{r_1}$ and 
\begin{eqnarray*}
\R^+(\updelta)&=&\frac{1}{2}\left((x_1y_1)^{r_1}+(x_2y_2)^{r_1}\right)\\
&=&\frac{1}{2}\left((x_1y_1+x_2y_2)^{r_1}-\sum_{i=1}^{r_1-1}{r_1\choose i} (x_1y_1)^{r_1-i}(x_2y_2)^{i}\right).
\end{eqnarray*}
Thus, 
\begin{eqnarray*}
\R^+(\updelta)&=&\R^+\left(\R^+(\updelta)\right)\\
&=&\frac{1}{2}\left((x_1y_1+x_2y_2)^{r_1}-\R^+\left(\sum_{i=1}^{r_1-1}{r_1\choose i} (x_1y_1)^{r_1-i}(x_2y_2)^{i}\right)\right)\\
&=&\frac{1}{2}\left((x_1y_1+x_2y_2)^{r_1}-(x_1x_2\cdot y_1y_2)\left(\sum_{i=0}^{r_1-2}{r_1\choose i+1}\R^+ \left((x_1y_1)^{r_1-2-i}(x_2y_2)^{i}\right)\right)\right)
\end{eqnarray*}
which belongs to $\J$ by the induction hypothesis, because each term $(x_1y_1)^{r_1-2-i}(x_2y_2)^{i}$
has degree lower than $\updelta$. 

\textsc{Subcase 2.2:} One of $\{r_{0,3},\dots, r_{p-1,3}, r_{0,4},\dots, r_{p-1,4}\}$ is positive and all others are zero.
Without loss of generality, we may assume that $r_{0,3}$ is positive and all others are zero. Thus $\updelta=(x_1y_1)^{r_1}\cdot (x_1^{p-1})^{r_{0,3}}$ and $\R^+(\updelta)=\frac{1}{2}\left((x_1y_1)^{r_1}\cdot (x_1^{p-1})^{r_{0,3}}+(x_2y_2)^{r_1}\cdot (x_2^{p-1})^{r_{0,3}}\right)$, which can be expressed as
$$\frac{1}{2}\left[\left((x_1y_1)^{r_1}+(x_2y_2)^{r_1}\right)\cdot \left((x_1^{p-1})^{r_{0,3}}+(x_2^{p-1})^{r_{0,3}}\right)-(x_1y_1)^{r_1}\cdot (x_2^{p-1})^{r_{0,3}}-(x_2y_2)^{r_1}\cdot (x_1^{p-1})^{r_{0,3}}\right].
$$
We have seen in the previous subcase that  $\frac{1}{2}\left((x_1y_1)^{r_1}+(x_2y_2)^{r_1}\right)\in\J$, 
it suffices to show that
$$\frac{1}{2}\left[(x_1y_1)^{r_1}\cdot (x_2^{p-1})^{r_{0,3}}+(x_2y_2)^{r_1}\cdot (x_1^{p-1})^{r_{0,3}}\right]\in\J.$$
To see that, we define 
\begin{equation}
\label{ }
\ell:=|r_1-(p-1)\cdot r_{0,3}|.
\end{equation}
If $r_1\leqslant (p-1)\cdot r_{0,3}$, then
\begin{eqnarray*}
\frac{1}{2}\left[(x_1y_1)^{r_1}\cdot (x_2^{p-1})^{r_{0,3}}+(x_2y_2)^{r_1}\cdot (x_1^{p-1})^{r_{0,3}}\right]
&=& \frac{1}{2}\cdot (x_1x_2)^{r_1}\left[y_1^{r_1}\cdot x_2^{\ell}+y_2^{r_1}\cdot x_1^{\ell}\right]\\
&=&(x_1x_2)^{r_1} \cdot \R^+(y_1^{r_1}\cdot x_2^{\ell})\in\J
\end{eqnarray*}
by the induction hypothesis. Similarly, if $r_1>(p-1)\cdot r_{0,3}$, then
\begin{eqnarray*}
\frac{1}{2}\left[(x_1y_1)^{r_1}\cdot (x_2^{p-1})^{r_{0,3}}+(x_2y_2)^{r_1}\cdot (x_1^{p-1})^{r_{0,3}}\right]
&=& \frac{1}{2}\cdot (x_1x_2)^{(p-1)\cdot r_{0,3}}\left[x_1^\ell\cdot y_1^{r_1}+x_2^{\ell}\cdot y_2^{r_1}\right]\\
&=&(x_1x_2)^{(p-1)\cdot r_{0,3}} \cdot \R^+(x_1^\ell\cdot y_1^{r_1})\in\J
\end{eqnarray*}
as well.

\textsc{Case 3:} Both $r_1$ and $r_2$ are zero. Then 
$$\updelta=\prod_{i=0}^{p-1}(x_1^{p-1-i}y_2^i)^{r_{i,3}}(x_2^{p-1-i}y_1^i)^{r_{i,4}}=x_1^ax_2^by_1^cy_2^d$$
for some $a,b,c,d\in\N$.
Note that $x_1x_2$ and $y_1y_2$ belong to $\B$. Thus if $x_1x_2$ (or $y_1y_2$) divides $\updelta$, then 
$\R^+(\updelta)=x_1x_2\cdot \R^+(\updelta')$ (or $=y_1y_2\cdot \R^+(\updelta')$) for some $\updelta'\in\Delta$ with
$\deg(\updelta')<\deg(\updelta)$. Thus $\R^+(\updelta)\in\J$ by the induction hypothesis. 

Now we may assume that neither $x_1x_2$ nor $y_1y_2$ divides $\updelta$. This means that $\updelta$ involves at most two variables. Thus $\updelta$ can be expressed as one of the following 
$$\{x_1^ay_1^c, x_1^ay_2^d, x_2^by_1^c, x_2^by_2^d\}.$$
Note that $\updelta\neq 1$. Thus one  of $\{r_{0,3},\dots, r_{p-1,3}, r_{0,4},\dots, r_{p-1,4}\}$ must be positive. 

\textsc{Subcase 3.1:} Two and more of $\{r_{0,3},\dots, r_{p-1,3}, r_{0,4},\dots, r_{p-1,4}\}$ are positive. In this subcase,
two variables are involved in $\updelta$. (1) If $\updelta=x_1^ay_1^c$, 
then $a,c\in\N^+$. Note that $\deg(\updelta)\geqslant 2(p-1)\geqslant 4$, thus either $a$ or $c$ is greater than or equal to $2$. 
We may assume that $a\geqslant 2$.  Let $f:=x_1y_1+x_2y_2\in\B$. Then
$\updelta = x_1^ay_1^c=x_1y_1\cdot x_1^{a-1}y_1^{c-1} = (f-x_2y_2) \cdot x_1^{a-1}y_1^{c-1}=
f\cdot x_1^{a-1}y_1^{c-1}-x_2y_2 \cdot x_1^{a-1}y_1^{c-1}$. Thus
$$\R^+(\updelta)=f\cdot \R^+(x_1^{a-1}y_1^{c-1})-x_1x_2\cdot \R^+(y_2 \cdot x_1^{a-2}y_1^{c-1})\in\J$$
because $\R^+(x_1^{a-1}y_1^{c-1})$ and $\R^+(y_2 \cdot x_1^{a-2}y_1^{c-1})$ both lie in $\J$ by the induction hypothesis. 
(2) If $\updelta=x_1^ay_2^d$,  then $a,d\in\N^+$.  Since $\deg(\updelta)\geqslant 2(p-1)$, it follows that either $a$ or $d$ must be greater than or equal to $p-1$. We may assume that $a\geqslant p-1$.  Let $h:=x_1^{p-2}y_2+x_2^{p-2}y_1\in\B$. Then
$\updelta=x_1^{p-2}y_2\cdot x_1^{a-p+2}y_2^{d-1}=(h-x_2^{p-2}y_1)\cdot x_1^{a-p+2}y_2^{d-1}=
h \cdot x_1^{a-p+2}y_2^{d-1}- x_2^{p-2}y_1\cdot x_1^{a-p+2}y_2^{d-1}$. By the induction hypothesis, we see that
$h \cdot \R^+(x_1^{a-p+2}y_2^{d-1})\in \J$ and 
$$\R^+(x_2^{p-2}y_1\cdot x_1^{a-p+2}y_2^{d-1})=x_1x_2\cdot \R^+(x_1^{a-p+1}x_2^{p-3}y_1y_2^{d-1})\in \J.$$
Hence, $\R^+(\updelta)\in\J$ as well.  (3) Note that $x_1$ and $x_2$ are symmetric, thus if $\updelta=x_2^by_1^c$ or $x_2^by_2^d$, then a similar argument can be applied and the same conclusion will be obtained. 

\textsc{Subcase 3.2:} One of $\{r_{0,3},\dots, r_{p-1,3}, r_{0,4},\dots, r_{p-1,4}\}$ is positive and others are zero. 
We may assume that $r_{i,3}$ is positive for some $i\in\{0,1,\dots,p-1\}$. Symmetrically, the following argument also works for the case where some $r_{i,4}$ is positive, and obtains the same conclusion. Thus, let us focus on the case that
$r_{i,3}>0$. In this subcase, $\updelta=(x_1^{p-1-i}y_2^i)^{r_{i,3}}$ and 
\begin{eqnarray*}
\R^+(\updelta)&=&\frac{1}{2}\left((x_1^{p-1-i}y_2^i)^{r_{i,3}}+(x_2^{p-1-i}y_1^i)^{r_{i,3}}\right) \\
&=&\frac{1}{2}\left((x_1^{p-1-i}y_2^i+x_2^{p-1-i}y_1^i)^{r_{i,3}}-\sum_{j=1}^{r_{i,3}-1}{r_{i,3}\choose j} (x_1^{p-1-i}y_2^i)^{r_{i,3}-j}(x_2^{p-1-i}y_1^i)^{j}\right).
\end{eqnarray*}
Note that $x_1^{p-1-i}y_2^i+x_2^{p-1-i}y_1^i\in\B$ and $\R^+(\updelta)=\R^+\left(\R^+(\updelta)\right)$. Thus, it suffices to show that 
$$\R^+\left((x_1^{p-1-i}y_2^i)^{r_{i,3}-j}(x_2^{p-1-i}y_1^i)^{j}\right)\in\J$$
for all $j=1,2,\dots,r_{i,3}-1$. For notational simplicity, we write  $f_{ij}:=(x_1^{p-1-i}y_2^i)^{r_{i,3}-j}(x_2^{p-1-i}y_1^i)^{j}.$
When $i=0,1,\dots,p-2$, we observe that $x_1x_2\in\B$ and $x_1x_2$ divides $f_{ij}$. Thus
$$\R^+(f_{ij})=x_1x_2\cdot\R^+(f_{ij}/(x_1x_2))\in\J$$
because $f_{ij}/(x_1x_2)$ has degree less than the degree of $\updelta$ and the induction hypothesis applies. If $i=p-1$, then
$y_1y_2$ divides $f_{ij}$. As $y_1y_2$ belongs to $\B$ as well, the same reason implies that
$$\R^+(f_{ij})=y_1y_2\cdot\R^+(f_{ij}/(y_1y_2))\in\J.$$
Hence, $\R^+(\updelta)\in\J$ in this subcase.

The arguments of three cases above complete the proof and therefore, $\F_p[V\oplus V^*]^{O_2^{+}(\F_p)}$ can be  generated by the homogeneous set $\B$.
\end{proof}

\section{$O_2^-(\F_p)$-Invariants} \label{sec3}
\setcounter{equation}{0}
\renewcommand{\theequation}
{3.\arabic{equation}}
\setcounter{theorem}{0}
\renewcommand{\thetheorem}
{3.\arabic{theorem}}

\noindent In this section, we will use a different method that comes from covariant theory of finite groups to compute 
$\F_p[V\oplus V^*]^{O_2^-(\F_p)}$. Note that $|O_2^-(\F_p)|=2(p+1)$ for $p\geqslant 3$, thus the standard representation $V$ is also non-modular, but generators of $O_2^-(\F_p)$ depend on the prime $p$. In fact, by \cite[Example 5]{NS02}, we know that if $p\equiv 3 \mod 4$, then $O_2^-(\F_p)$ is generated by the following matrices 
\begin{equation}
\label{ }
\upeta :=\begin{pmatrix}
-1 & 0  \\
0 & 1
\end{pmatrix}, \quad
\upsigma_1 :=\begin{pmatrix}
a & -b  \\
b & a
\end{pmatrix},
\end{equation}
where $a^2+b^2=1 \text{~with~} a, b\in \F_p^{\times}$.
The invariant ring 
\begin{equation}
\label{ }
\F_p[V]^{O_2^-(\F_p)}=\F_p[x_1^2+x_2^2, x_1^{p+1}+x_2^{p+1}]
\end{equation}
is a polynomial ring. If $p\equiv 1 \mod 4$, then we may choose $\lambda$ as a generator of $\F_p^{\times}$ and $O_{2}^-(\F_p)$ can be generated by $\upeta$ and 
\begin{equation}
\label{ }
\upsigma_2 :=\begin{pmatrix}
a & \lambda^{-1}\cdot b  \\
b & a
\end{pmatrix},
\end{equation}
where $a^2-\lambda^{-1}\cdot b^2=1, a, b\in \F_p^{\times}$. Moreover, 
$\F_p[V]^{O_2^-(\F_p)}=\F_p[x_1^2-\lambda\cdot x_2^2, x_1^{p+1}-\lambda\cdot x_2^{p+1}]$
is a polynomial ring as well.

This section will be devoted to giving a detailed proof of Theorem \ref{thm3} for the case $p\equiv 3 \mod 4$, and 
the case $p\equiv 1 \mod 4$ can be verified in the same way.  Thus, throughout the rest of this section, we assume that
$p\equiv 3 \mod 4$, and for simplicity, we write
\begin{equation}
\label{ }
G:=O_2^-(\F_p)\times O_2^-(\F_p)
\end{equation}
for the direct product of two copies of $O_2^-(\F_p)$. Note that $O_2^-(\F_p)$ can be regarded as a subgroup of $G$ via the standard embedding, and $G$ also acts on 
$\F_p[V\oplus V^*]$ in the natural way.

\subsection{Hilbert series and $s$-invariants}

Consider the invariant ring $\F_p[V\oplus V^*]^G$. Note that $$\{x_1^2+x_2^2, x_1^{p+1}+x_2^{p+1}, y_1^2+y_2^2, y_1^{p+1}+y_2^{p+1}\}$$ is a homogeneous system of parameters for $\F_p[V\oplus V^*]^G$, and the product of their degrees is equal to the order of $G$, thus it follows from \cite[Corollary 3.1.6]{CW11} that 
$$\F_p[V\oplus V^*]^G=\F_p[x_1^2+x_2^2, x_1^{p+1}+x_2^{p+1}, y_1^2+y_2^2, y_1^{p+1}+y_2^{p+1}]$$
is a polynomial algebra.  Thus the Hilbert series of $\F_p[V\oplus V^*]^G$ is
\begin{equation}\label{eq3.5}
\mathcal{H}(\F_p[V\oplus V^*]^G; t)=\frac{1}{(1-t^2)^2(1-t^{p+1})^2}.
\end{equation}
Choose $\F_p[V\oplus V^*]^G$ as a Noether normalization of $\F_p[V\oplus V^*]^{O_2^-(\F_p)}$. Since 
$\F_p[V\oplus V^*]^{O_2^-(\F_p)}$ is Cohen-Macaulay, it follows from \cite[Corollary 3.1.4]{CW11}  that
 $\F_p[V\oplus V^*]^{O_2^-(\F_p)}$ is a free module of rank 
 $$[G:O_2^-(\F_p)]=\frac{|G|}{|O_2^-(\F_p)|}=|O_2^-(\F_p)|=2(p+1)$$
over $\F_p[V\oplus V^*]^G$. Hence,  the Hilbert series of $\F_p[V\oplus V^*]^{O_2^-(\F_p)}$ can be written as
\begin{equation}\label{eq3.6}
\mathcal{H}\left(\F_p[V\oplus V^*]^{O_2^-(\F_p)}; t\right)=\frac{1+t^{s_1}+\dots+t^{s_{2p+1}}}{(1-t^2)^2(1-t^{p+1})^2}
\end{equation}
for some $s_1\leqslant \cdots \leqslant s_{2p+1}\in\N^+$.

In the language of modules of covariants (see \cite[Section 4]{BC10}), the invariant ring $\F_p[V\oplus V^*]^{O_2^-(\F_p)}$ is isomorphic to the module of covariants 
$$\F_p[V\oplus V^*]^G(M)$$
where $M:=\Ind_{O_2^-(\F_p)}^G \F_p$ denotes the permutation $\F_pG$-module on the left coset space $G/O_2^-(\F_p)$ with dimension $|O_2^-(\F_p)|=2(p+1)$. Thus the quotient of two Hilbert series (\ref{eq3.5}) and (\ref{eq3.6}) is
\begin{equation}
\label{eq3.7}
\frac{\mathcal{H}(\F_p[V\oplus V^*]^G(M); t)}{\mathcal{H}(\F_p[V\oplus V^*]^G; t)}=\frac{\mathcal{H}(\F_p[V\oplus V^*]^{O_2^-(\F_p)}; t)}{\mathcal{H}(\F_p[V\oplus V^*]^G; t)}=1+t^{s_1}+\dots+t^{s_{2p+1}}.
\end{equation}
Recall that the $s$-invariant of $\F_p[V\oplus V^*]^G(M)$ also appears in this quotient; see \cite[Introduction, page 2]{BC10}. More precisely, 
\begin{equation}
\label{eq3.8}
\frac{\mathcal{H}(\F_p[V\oplus V^*]^{O_2^-(\F_p)}; t)}{\mathcal{H}(\F_p[V\oplus V^*]^G; t)}=
r_{O_2^-(\F_p)}+s_{O_2^-(\F_p)}(t-1)+O\left((t-1)^2\right)
\end{equation}
where $r_{O_2^-(\F_p)}:=r_{\F_p[V\oplus V^*]^G}(\F_p[V\oplus V^*]^{O_2^-(\F_p)})=\dim(M)=2(p+1)$ and 
$$s_{O_2^-(\F_p)}:=s_{\F_p[V\oplus V^*]^G}(\F_p[V\oplus V^*]^{O_2^-(\F_p)})$$ denotes the $s$-invariant of $\F_p[V\oplus V^*]^{O_2^-(\F_p)}$ over
$\F_p[V\oplus V^*]^G$.

\begin{lem}\label{lem3.1}
$s_{O_2^-(\F_p)}=2(p+1)^2.$
\end{lem}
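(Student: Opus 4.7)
The plan is to interpret $s_{O_2^-(\F_p)}$ as the derivative $Q'(1)$ of the numerator polynomial $Q(t):=1+t^{s_1}+\cdots+t^{s_{2p+1}}$ coming from (\ref{eq3.7}), so that $s_{O_2^-(\F_p)} = s_1 + s_2 + \cdots + s_{2p+1}$, and then to evaluate this sum by exploiting the Gorenstein symmetry of $R := \F_p[V\oplus V^*]^{O_2^-(\F_p)}$.

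First, I would verify that $O_2^-(\F_p)$ sits inside $\SL(V\oplus V^*)$ and contains no element acting as a pseudo-reflection on $V\oplus V^*$. The determinant statement is immediate: each $g \in O_2^-(\F_p)$ acts on $V^*$ via $(g^T)^{-1}$ with determinant $\det(g)^{-1}$, so the combined action on $V\oplus V^*$ has determinant $\det(g)\det(g)^{-1}=1$. For the pseudo-reflection check, an eigenvalue inspection suffices: a nontrivial rotation in $SO_2^-(\F_p)$ with eigenvalues $\omega, \omega^{-1}$ on $V$ acts on $V\oplus V^*$ with eigenvalue pattern $(\omega,\omega^{-1},\omega^{-1},\omega)$, while a reflection in $O_2^-(\F_p)\setminus SO_2^-(\F_p)$ acts with eigenvalues $(1,-1,1,-1)$; neither pattern has a single non-unit eigenvalue. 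Watanabe's theorem then yields that $R$ is Gorenstein with $a$-invariant $a(R)=-\dim(V\oplus V^*) = -4$.

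Next, the Gorenstein property forces the polynomial $Q(t)$ to be palindromic; if $N$ denotes its top degree then $s_i + s_{2p+1-i} = N$ for every $i$, giving $\sum_{i=0}^{2p+1} s_i = (p+1)N$. To identify $N$, I would use the fact that the $a$-invariant of $R$ equals the degree of the Hilbert series $\mathcal{H}(R;t) = Q(t)/\prod_{j=1}^{4}(1-t^{d_j})$ as a rational function, namely $a(R) = \deg Q - \sum_{j=1}^{4} d_j$, with parameter degrees $d_1,\ldots,d_4 = 2,2,p+1,p+1$. This yields $N = \deg Q = a(R) + (2p+6) = 2(p+1)$, and substituting gives $s_{O_2^-(\F_p)} = (p+1)\cdot 2(p+1) = 2(p+1)^2$.

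I expect the main obstacle to be justifying the Gorenstein statement with the precise value $a(R)=-4$, since this rests on invoking Watanabe's theorem together with the careful pseudo-reflection bookkeeping over every element type of $O_2^-(\F_p)$ acting on $V\oplus V^*$; once this is established, the remainder of the argument is formal manipulation of Hilbert series and so requires no further input.
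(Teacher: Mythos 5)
Your proposal is correct, but it takes a genuinely different route from the paper. Both arguments begin from the same identification $s_{O_2^-(\F_p)}=Q'(1)=s_1+\cdots+s_{2p+1}$ implicit in (\ref{eq3.7})--(\ref{eq3.8}), and both ultimately rest on structural features of the doubled action on $V\oplus V^*$. The paper proceeds analytically: it invokes the Laurent expansion of $\mathcal{H}(\F_p[V\oplus V^*]^{O_2^-(\F_p)};t)$ at $t=1$ from \cite[Proposition 3.1.4]{NS02}, whose second coefficient is governed by the number of reflections of $O_2^-(\F_p)$ on $V\oplus V^*$; that number is zero (the paper cites \cite[Theorem 1]{CW17}, while you verify it by direct eigenvalue inspection), and a logarithmic differentiation of the resulting identity evaluated at $t=1$ gives $s=2(p+1)^2$. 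You instead argue structurally: every $g\in O_2^-(\F_p)$ acts on $V\oplus V^*$ with determinant $\det(g)\det(g)^{-1}=1$, the action is nonmodular since $p\nmid 2(p+1)$, and so Watanabe's theorem (equivalently, the Molien functional equation $\mathcal{H}(1/t)=t^4\mathcal{H}(t)$ combined with Stanley's criterion) shows the invariant ring is Gorenstein with $a$-invariant $-4$; hence the numerator over the h.s.o.p.\ of degrees $2,2,p+1,p+1$ is palindromic of top degree $2(p+1)$, and the exponent sum is $(p+1)\cdot 2(p+1)=2(p+1)^2$. Your route is more conceptual and, as a bonus, pins down the full symmetry of the secondary degrees (consistent with the explicit basis of Theorem \ref{thm4}), whereas the paper's route is a self-contained power-series manipulation needing only the vanishing of the reflection count and no Gorenstein machinery. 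The one point you should make explicit is that the Molien-type identities (or the lift to characteristic zero behind them) are legitimate here precisely because $|O_2^-(\F_p)|=2(p+1)$ is prime to $p$; with that remark added, your argument is complete.
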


\begin{proof}
Together (\ref{eq3.7}) and (\ref{eq3.8}) imply that  
\begin{equation}\label{eq3.9}
1+t^{s_1}+\dots+t^{s_{2p+1}}=r_{O_2^-(\F_p)}+s_{O_2^-(\F_p)}(t-1)+O\left((t-1)^2\right).
\end{equation}
By \cite[Proposition 3.1.4]{NS02}, the Laurent expansion of  $\mathcal{H}(\F_p[V\oplus V^*]^{O_2^-(\F_p)}; t)$ gives us  
$$
\frac{1+t^{s_1}+\dots+t^{s_{2p+1}}}{(1-t^2)^2\left(1-t^{p+1}\right)^2}=\mathcal{H}(\F_p[V\oplus V^*]^{O_2^-(\F_p)}; t)=\frac{1}{|O_2^-(\F_p)|}\left(\frac{1}{(1-t)^4}+\frac{c}{(1-t)^3}+\cdots\right),
$$
where $2\cdot c$ equals the number of all reflections of $O_2^-(\F_p)$ on $V\oplus V^*$. 
Multiplying both sides by $(1-t)^4$, we obtain
$$
\frac{1+t^{s_1}+\dots+t^{s_{2p+1}}}{(1+t)^2\left(1+t+\dots +t^p\right)^2}=\frac{1}{|O_2^-(\F_p)|}\left(1+c(1-t)+\cdots\right).
$$
This equation together with (\ref{eq3.9}) implies that 
$$
\frac{|O_2^-(\F_p)|}{(1+t)^2\left(1+t+\dots +t^p\right)^2}=\frac{1+c(1-t)+\cdots}{r_{O_2^-(\F_p)}+s_{O_2^-(\F_p)}(t-1)+O\left((t-1)^2\right)}.
$$
Differentiating with respect to $t$ by logarithmic differentiation, we obtain 
\begin{eqnarray*}
&&|O_2^-(\F_p)|\cdot\left((-2)\frac{(1+t)\left(\sum_{j=1}^{p+1}t^{j-1}\right)^2+(1+t)^2\sum_{j=1}^{p+1}t^{j-1}(1+2t+\dots+pt^{p-1})}{(1+t)^4\left(\sum_{j=1}^{p+1}t^{j-1}\right)^4}\right)\\
&=&\frac{\left(-c+\cdots\right)\cdot \left(r_{O_2^-(\F_p)}+s_{O_2^-(\F_p)}(t-1)+O\left((t-1)^2\right)\right)}{\left(r_{O_2^-(\F_p)}+s_{O_2^-(\F_p)}(t-1)+O\left((t-1)^2\right)\right)^2}\\
&&-\frac{\left(1+c(1-t)+\cdots\right)\cdot\left(s_{O_2^-(\F_p)}+O\left((t-1)\right)\right)}{\left(r_{O_2^-(\F_p)}+s_{O_2^-(\F_p)}(t-1)+O\left((t-1)^2\right)\right)^2}.
\end{eqnarray*}
Setting $t=1$ gives 
\begin{equation}
\label{eq3.10}
-|O_2^-(\F_p)|\cdot\frac{2^2(p+1)^2+2^2(p+1)^2p}{2^4(p+1)^4}=\frac{-c\cdot r_{O_2^-(\F_p)}-s_{O_2^-(\F_p)}}{(r_{O_2^-(\F_p)})^2}.
\end{equation}
Note that $c=0$, because the image of $O_2^-(\F_p)$ on $V\oplus V^*$ contains no reflections (see \cite[Theorem 1]{CW17}). Recall that $r_{O_2^-(\F_p)}=2(p+1)$, thus substituting these numbers back to (\ref{eq3.10}), we conclude that
$s_{O_2^-(\F_p)}=2(p+1)\cdot(p+1)=2(p+1)^2.$
\end{proof}

\subsection{Jacobian criterion}

For $0\leqslant i\leqslant p+1$ and $1\leqslant j\leqslant p$, we define $u:=x_1y_1+x_2 y_2$ and
\begin{eqnarray*}
f_i &:= & u^{i} \\
f_{p+1+j} &:= & \Tr(x_1^{p+1-j}y_1^j)
\end{eqnarray*}
where $\Tr:=\Tr^{O_2^-(\F_p)}$ denotes the trace map. Clearly, Theorem \ref{thm3} is a direct consequence of the following 
theorem. Thus, the rest of this section is devoted to proving Theorem \ref{thm4}.

\begin{thm}\label{thm4}
As a free $\F_p[V\oplus V^*]^G$-module, $\F_p[V\oplus V^*]^{O_2^-(\F_p)}$ has a basis 
$\{f_0,f_1,\dots,f_{2p+1}\}$.
\end{thm}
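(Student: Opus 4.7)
The strategy is to appeal to the Jacobian criterion for modules of covariants from \cite[Theorem 3]{BC10}: once a candidate set of $O_2^-(\F_p)$-invariants is identified whose cardinality matches the free rank $2(p+1)$ and whose degrees sum to the $s$-invariant $s_{O_2^-(\F_p)}=2(p+1)^2$ computed in Lemma \ref{lem3.1}, it remains only to verify that the associated Jacobian-type determinant is nonzero. The numerics are already favorable since the proposed basis $\{f_0,f_1,\dots,f_{2p+1}\}$ contains exactly $2p+2$ elements.

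First I would confirm that each $f_i$ genuinely lies in $\F_p[V\oplus V^*]^{O_2^-(\F_p)}$. The element $u=x_1y_1+x_2y_2$ is the canonical pairing between $V$ and $V^*$, hence invariant under all of $\GL_2(\F_p)$, and consequently $f_i=u^i$ is invariant for every $0\leqslant i\leqslant p+1$; each $f_{p+1+j}=\Tr(x_1^{p+1-j}y_1^j)$ is invariant by construction of the relative trace $\Tr=\Tr^{O_2^-(\F_p)}$. Next I would verify that the degrees sum correctly:
$$\sum_{i=0}^{p+1}\deg(u^i)+\sum_{j=1}^{p}\deg\bigl(\Tr(x_1^{p+1-j}y_1^j)\bigr)=\sum_{i=0}^{p+1}2i+\sum_{j=1}^{p}(p+1)=(p+1)(p+2)+p(p+1)=2(p+1)^2,$$
which matches $s_{O_2^-(\F_p)}$ exactly.

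The main obstacle is the verification that the relevant Jacobian determinant does not vanish. Concretely, for the Noether normalization $\F_p[V\oplus V^*]^G=\F_p[x_1^2+x_2^2,\,x_1^{p+1}+x_2^{p+1},\,y_1^2+y_2^2,\,y_1^{p+1}+y_2^{p+1}]$, one must construct the Jacobian-type matrix prescribed by \cite[Theorem 3]{BC10} using $\{f_0,\dots,f_{2p+1}\}$ together with the four primary invariants and check its determinant is a nonzero polynomial in $\F_p[V\oplus V^*]$. I would attempt this by first exploiting the block structure inherited from the decomposition of the basis into pure powers of $u$ (which depend on both $x$'s and $y$'s symmetrically) and traces $\Tr(x_1^{p+1-j}y_1^j)$ (which are $O_2^-(\F_p)$-symmetrizations of pure monomials), and then, if a direct symbolic evaluation seems intractable, specializing to convenient test points -- for instance, setting $x_2=0$ or $y_2=0$ -- to reduce the determinant to a product of Vandermonde-like factors whose non-vanishing can be read off by degree and combinatorial considerations. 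Once the determinant is shown to be nonzero, the Jacobian criterion immediately delivers the conclusion that $\{f_0,\dots,f_{2p+1}\}$ is a free basis of $\F_p[V\oplus V^*]^{O_2^-(\F_p)}$ over $\F_p[V\oplus V^*]^G$, completing the proof of Theorem \ref{thm4}.
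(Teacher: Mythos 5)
Your overall strategy is the same as the paper's: exhibit $2p+2$ invariants whose degrees sum to the $s$-invariant $2(p+1)^2$ of Lemma \ref{lem3.1}, then invoke the Jacobian criterion of \cite[Theorem 3]{BC10}. The invariance of the $f_i$ and the degree count $(p+1)(p+2)+p(p+1)=2(p+1)^2$ are correct. However, there is a genuine gap at exactly the point you yourself call ``the main obstacle'': you never prove that the relevant determinant is nonzero. You only outline two possible attacks (exploit a block structure; specialize at $x_2=0$ or $y_2=0$ and hope for Vandermonde-like factors) without carrying either one out. In the paper this non-vanishing is the entire content of Lemma \ref{lem3.3}, and it is where essentially all of the work lies; saying the non-vanishing ``can be read off by degree and combinatorial considerations'' is a plan, not a proof.

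In addition, the determinant you propose to form is not the one the criterion requires. The paper first transports the problem, via the isomorphism $\uppsi$ of \cite[Lemma 5]{BC10}, to the module of covariants $\F_p[V\oplus V^*]^G(M)$ with $M=\Ind_{O_2^-(\F_p)}^G\F_p$; there the Jacobian determinant of the covariants $\upomega_j=\uppsi(f_j)$ is the $(2p+2)\times(2p+2)$ determinant $\det\left(g_i(f_j)\right)$, where the $g_i$ run over coset representatives of $G/O_2^-(\F_p)$ (the paper takes $(\upsigma_1^i,1)$ and $(\upeta\upsigma_1^i,1)$). The four primary invariants generating $\F_p[V\oplus V^*]^G$ do not enter this matrix at all, so your description of a ``Jacobian-type matrix'' built from $\{f_0,\dots,f_{2p+1}\}$ together with the primary invariants suggests a classical partial-derivative Jacobian, which is not the object \cite[Theorem 3 (iii)]{BC10} tests. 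To close the gap you would need to (a) state the reduction to covariants, (b) form $\det\left(g_i(f_j)\right)$, and (c) prove it nonzero --- for instance, as in the paper, by passing to lex leading terms (each column has a common leading monomial $(x_1y_1)^j$ or $x_1^{p+1-j}y_1^j$, so the coefficient of their product in the determinant is a scalar matrix $K$) and showing $K$ is invertible by Gaussian elimination using the relations among the scalars $a_{ij},b_{ij}$. Your proposed specialization $x_2=y_2=0$ is in the same spirit as this leading-term reduction, but as written it remains unexecuted, so the theorem is not established by your argument.
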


\begin{proof}
By \cite[Lemma 5]{BC10},  there exists an $\F_p[V\oplus V^*]^G$-module isomorphism: 
\begin{equation}
\label{ }
\uppsi: \F_p[V\oplus V^*]^{O_2^-(\F_p)} \ra \F_p[V\oplus V^*]^G(M)
\end{equation}
defined by 
$f_j\mapsto \Tr_{O_2^-(\F_p)}^G(f_j\otimes 1)=\sum\limits_{g\in G/O_2^-(\F_p)}g(f_j\otimes 1)=\sum\limits_{g\in G/O_2^-(\F_p)}g(f_j)\otimes g$. 

For $j=0,1,\dots, 2p+1$, we denote by $\upomega_j:=\uppsi(f_j)$. 
Hence, to show that $\{f_0,f_1,\dots,f_{2p+1}\}$ is a free basis of $\F_p[V\oplus V^*]^{O_2^-(\F_p)}$ over $\F_p[V\oplus V^*]^G$, it suffices to show that $\{\upomega_0, \upomega_1,\dots,\upomega_{2p+1}\}$ is a free basis of 
$\F_p[V\oplus V^*]^G(M)$ as an $\F_p[V\oplus V^*]^G$-module. We will use the Jacobian criterion in \cite[Theorem 3 (iii)]{BC10} to prove this statement.

Note that the action of $G$ is degree-preserving, thus $\deg(\upomega_j)=\deg(f_j)$, and 
$$\sum_{j=0}^{2p+1}\deg(\upomega_j)=\sum_{j=0}^{2p+1}\deg(f_j)=2(p+1)^2$$
which is equal to the $s$-invariant by Lemma \ref{lem3.1}.  This fact, together with Lemma \ref{lem3.3} below, shows that
$\{\upomega_0, \upomega_1,\dots,\upomega_{2p+1}\}$ is a free basis of $\F_p[V\oplus V^*]^G(M)$ over $\F_p[V\oplus V^*]^G$. Therefore, $\{f_0,f_1,\dots,f_{2p+1}\}$ is a free basis of $\F_p[V\oplus V^*]^{O_2^-(\F_p)}$ as an $\F_p[V\oplus V^*]^G$-module.
\end{proof}

We take the definition of Jacobian determinant of covariants in \cite[Section 3.2]{BC10}. To complete the proof of 
Theorem \ref{thm4}, we need to prove that the Jacobian determinant of 
$\{\upomega_0, \upomega_1,\dots,\upomega_{2p+1}\}$ is nonzero. 

\begin{lem}\label{lem3.3}
$\Jac(\upomega_0, \upomega_1,\dots,\upomega_{2p+1})\neq 0$.
\end{lem}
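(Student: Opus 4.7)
The plan is to apply the Jacobian criterion of \cite[Theorem 3(iii)]{BC10}. A direct degree count gives $\deg(f_i) = 2i$ for $0 \leq i \leq p+1$ and $\deg(f_{p+1+j}) = p+1$ for $1 \leq j \leq p$, so
\[
\sum_{j=0}^{2p+1}\deg(f_j) \;=\; (p+1)(p+2) + p(p+1) \;=\; 2(p+1)^2 \;=\; s_{O_2^-(\F_p)}
\]
by Lemma \ref{lem3.1}. The criterion therefore reduces the lemma to showing that $\Jac(\upomega_0, \ldots, \upomega_{2p+1}) \in \F_p[V \oplus V^*]$ is a nonzero polynomial.

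To make this Jacobian concrete, I would choose a transversal for $G/O_2^-(\F_p)$. Since $O_2^-(\F_p)$ is diagonally embedded in $G$, a natural transversal is $\{(h, 1) : h \in O_2^-(\F_p)\}$, and each such element acts on $\F_p[V \oplus V^*]$ only through the $x$-variables, fixing the $y$-variables. Under the isomorphism $\uppsi$, the Jacobian determinant equals, up to a nonzero scalar depending only on the chosen basis of $M$, the determinant of the $2(p+1) \times 2(p+1)$ matrix
\[
\Lambda \;:=\; \bigl[(h, 1)\cdot f_j\bigr]_{h \in O_2^-(\F_p),\ 0 \leq j \leq 2p+1} \in \mathrm{Mat}\bigl(\F_p[V \oplus V^*]\bigr).
\]
This matrix has a natural block structure: the first $p+2$ columns are the powers $\bigl((h, 1)\cdot u\bigr)^j$ for $0 \leq j \leq p+1$, forming a Vandermonde-type block in the polynomials $u_h := h(x_1)y_1 + h(x_2)y_2$, while the remaining $p$ columns involve the transformed $O_2^-(\F_p)$-traces of the monomials $x_1^{p+1-k} y_1^k$.

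To prove $\det(\Lambda) \neq 0$, I would evaluate at a suitably generic point $(\alpha, \beta, \gamma, \delta) \in \overline{\F_p}^4$ at which the polynomials $\{u_h : h \in O_2^-(\F_p)\}$ take pairwise distinct values, so that the Vandermonde block contributes a nonzero factor. The main obstacle is controlling the interaction with the trace columns and ensuring the full determinant does not vanish after combining the two blocks. The plan here is to pass to $\overline{\F_p}$, diagonalize the rotation subgroup of $O_2^-(\F_p)$ on the $x$-variables (with eigenvalues $\zeta, \zeta^{-1}$ for a primitive $(p+1)$-th root of unity $\zeta$), and organize the rows of $\Lambda$ into a $2 \times 2$ block form corresponding to the two cosets of the rotation subgroup inside $O_2^-(\F_p)$; the reflection $\upeta$ interchanges the eigenlines and so relates the two row-blocks by a controlled permutation. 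At a symmetric test point (for instance one with $y_1 = y_2$ after the diagonalization) the trace block should reduce, via row operations coming from the rotation block, to a second classical Vandermonde in the powers of $\zeta$, exhibiting $\det(\Lambda)$ as a nonzero product of two Vandermonde factors. As a robust alternative in case the bookkeeping of signs becomes unwieldy, one can attempt a direct numerical verification at an explicit low-symmetry point such as $(1, 0, 0, 1)$ and check invertibility of the resulting $2(p+1) \times 2(p+1)$ numerical matrix over $\F_p$.
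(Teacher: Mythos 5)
Your setup is the same as the paper's: you take the transversal $\{(h,1):h\in O_2^-(\F_p)\}$ of $G/O_2^-(\F_p)$ and reduce the lemma to the nonvanishing of the $2(p+1)\times 2(p+1)$ determinant $\det\bigl[(h,1)\cdot f_j\bigr]$ (the degree count $\sum\deg f_j=2(p+1)^2$ belongs to the proof of Theorem \ref{thm4}, not to this lemma, but it is correct). The problem is that the actual nonvanishing argument is never carried out, and the plan as stated has a structural flaw: the ``Vandermonde block'' formed by the columns $u_h^j$, $0\leqslant j\leqslant p+1$, is a $2(p+1)\times(p+2)$ \emph{rectangular} block, so pairwise distinctness of the values $u_h$ at a test point does not by itself ``contribute a nonzero factor'' to the determinant. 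The whole difficulty is precisely the interaction of this block with the $p$ trace columns, and your treatment of it (``should reduce, via row operations coming from the rotation block, to a second classical Vandermonde'') is a hope, not a proof; no symmetric test point is exhibited, no row operations are specified, and no evaluation of the resulting determinant is performed. The fallback of checking invertibility numerically at a point such as $(1,0,0,1)$ is not a proof either: it is a finite computation for each fixed $p$, not a uniform argument for all odd primes, and that particular point is likely degenerate (for instance $u$ and several of its translates may coincide or vanish there).

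For comparison, the paper avoids evaluation at points altogether. It orders monomials lexicographically with $x_1>y_1>x_2>y_2$; then every entry of a given column has the \emph{same} leading monomial (namely $(x_1y_1)^j$ for the columns coming from $u^j$, and $x_1^{p+1-j}y_1^j$ for the trace columns), so the coefficient of the top monomial of the determinant is the determinant of the scalar matrix of leading coefficients. Factoring the common monomial out of each column reduces the lemma to the invertibility of a numerical matrix $K$ built from the scalars $a_{ij},b_{ij}$ of (\ref{eq3.12}), whose sign structure (rows indexed by $\upsigma_1^i$ versus $\upeta\upsigma_1^i$) allows a block-triangularization into two invertible $(p+1)\times(p+1)$ Vandermonde-type blocks. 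If you want to salvage your evaluation approach, you would have to either make the two-Vandermonde factorization at a concrete point rigorous, or adopt the leading-term reduction; note also that the implication ``determinant of leading terms nonzero $\Rightarrow$ determinant nonzero'' is only valid here because each column has a common leading monomial, a feature your generic-point strategy does not exploit.
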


\begin{proof}
We may take $\{g_i:=(\upsigma_1^i, 1), g_{p+1+i}:=(\upeta\upsigma_1^i, 1), i=0,1,\dots, p\}$ as the set of representatives of the left coset $G/O_2^-(\F_p)$. Thus $\Jac(\upomega_0, \upomega_1,\dots,\upomega_{2p+1})=\det\left(g_i(f_j)\right)_{0\leqslant i,j\leqslant 2p+1}.$
To show this determinant is nonzero, we may endow $\F_p[V\oplus V^*]$ with the lexicographic monomial ordering ($x_1>y_1>x_2>y_2$) and  only need to show that  the following determinant 
$$J:=\det\left(\LT(g_i(f_j))\right)_{0\leqslant i,j\leqslant 2p+1}\neq 0.$$

We use $\upalpha\approx \upbeta$ to denote that there exists a nonzero scalar $c\in\F_p$ such that $\upbeta=c\cdot \upalpha$. By the action of $O_2^-(\F_p)$ on $\F_p[V\oplus V^*]$, it follows that
$$J\approx \det\left(\begin{array}{ccccccc}
1& x_1y_1&\dots& (x_1y_1)^{p+1}&x_1^py_1&\dots&x_1y_1^p \\
1& a_{11}x_1y_1& \dots& (a_{11}x_1y_1)^{p+1}&b_{11}x_1^py_1&\dots&b_{1p}x_1y_1^p \\
\vdots & \vdots & \ddots & \vdots &\vdots&\ddots&\vdots\\
1&a_{p1}x_1y_1&\dots& (a_{p1}x_1y_1)^{p+1}&b_{p1}x_1^py_1&\dots&b_{pp}x_1y_1^p \\
1&-x_1y_1&\dots&(x_1y_1)^{p+1}&-x_1^py_1&\dots&-x_1y_1^p\\
1&-a_{11}x_1y_1& 
\dots&(a_{11}x_1y_1)^{p+1}&-b_{11}x_1^py_1&\dots&-b_{1p}x_1y_1^p \\\vdots & \vdots & \ddots & \vdots &\vdots&\ddots&\vdots\\
1&-a_{p1}x_1y_1&\dots& (a_{p1}x_1y_1)^{p+1}&-b_{p1}x_1^py_1&\dots&-b_{pp}x_1y_1^p\\\end{array}\right)
$$
for some $a_{ij},b_{ij}\in\F_p^\times$, where 
\begin{equation}\label{eq3.12}
\left\{\begin{array}{lll}
a_{0j}&=&1, \quad 1\leqslant j\leqslant p+1,\\
a_{ij}&=&(a_{i1})^j, \quad  1\leqslant i\leqslant p, 2\leqslant j\leqslant p+1,\\
a_{p+1+i,j}&=&(-1)^ja_{ij}, \quad  0\leqslant i\leqslant p, 1\leqslant j\leqslant p+1,\\
b_{0j}&=&1,\quad 1\leqslant j\leqslant p,\\
b_{p+1+i,j}&=&(-1)^jb_{ij},\quad 0\leqslant i\leqslant p, 1\leqslant j\leqslant p.
\end{array}\right.
\end{equation}

Taking out the common factor of each column of the matrix above, we see that $J\neq 0$ if and only if the following matrix 
$$
K:=\left(\begin{array}{ccccccccccc}
1&1&1&\dots&1&1&1&1&\dots&1&1\\
1&a_{11}&a_{11}^2&\dots&a_{11}^p&a_{11}^{p+1}& b_{11}&b_{12}&\dots&b_{1, p-1}&b_{1p} \\
\vdots & \vdots & \vdots & \ddots & \vdots & \vdots &\vdots & \vdots & \ddots & \vdots & \vdots\\
1& a_{p1}&a_{p1}^2&\dots&a_{p1}^p&a_{p1}^{p+1}&b_{p1}&b_{p2}&\dots&b_{p,p-1}&b_{pp} \\
1&-1&1&\dots&-1&1& -1&1&\dots&1&-1\\
1&-a_{11}&a_{11}^2&\dots&-a_{11}^p&a_{11}^{p+1}&-b_{11}&b_{12}&\dots&b_{1,p-1}&-b_{1p}\\
\vdots & \vdots & \vdots & \ddots & \vdots & \vdots &\vdots & \vdots & \ddots & \vdots & \vdots\\
1&-a_{p1}&a_{p1}^2&\dots&-a_{p1}^p&a_{p1}^{p+1}&-b_{p1}&b_{p2}&\dots&b_{p,p-1}&-b_{pp}\\
\end{array}\right)
$$
is invertible. Using Gaussian elimination, together with (\ref{eq3.12}), we see that
$$K\approx \begin{pmatrix}
   K_1   &   K_0 \\
    0  &  K_2
\end{pmatrix}$$
where $K_1$ and $K_2$ are both invertible matrices of size $(p+1)\times (p+1)$. This means that $\det(K)\neq 0$ and therefore, $J$ is nonzero.
\end{proof}

\section*{Acknowledgments}
\noindent 
The symbolic computation language MAGMA \cite{BCP97} (http://magma.maths.usyd.edu.au/) was very helpful. This research was partially supported by NNSF of China under grant No. 12561003.
We thank the anonymous referee of an earlier version of this paper for their careful reading and helpful suggestions. We also thank Yin Chen for valuable comments and many stimulating discussions on finite classical groups.

\begin{bibdiv}
  \begin{biblist}

\bib{BCP97}{article}{
   author={Bosma, Wieb},
   author={Cannon, John},
   author={Playoust, Catherine},
   title={The Magma algebra system. I. The user language},
   journal={J. Symbolic Comput.},
   volume={24},
   date={1997},
   number={3-4},
   pages={235--265},
   issn={0747-7171},
}

\bib{BC10}{article}{
   author={Broer, Abraham},
   author={Chuai, Jianjun},
   title={Modules of covariants in modular invariant theory},
   journal={Proc. Lond. Math. Soc. (3)},
   volume={100},
   date={2010},
   number={3},
   pages={705--735},
   issn={0024-6115},
}

\bib{BK11}{article}{
   author={Bonnaf\'e, C\'edric},
   author={Kemper, Gregor},
   title={Some complete intersection symplectic quotients in positive
   characteristic: invariants of a vector and a covector},
   journal={J. Algebra},
   volume={335},
   date={2011},
   pages={96--112},
   issn={0021-8693},
}

\bib{CH96}{article}{
   author={Campbell, H. Eddy A.},
   author={Hughes, I. P.},
   title={Two-dimensional vector invariants of parabolic subgroups of $
   \GL_2(\F_p)$ over the field $\F_p$},
   journal={J. Pure Appl. Algebra},
   volume={112},
   date={1996},
   number={1},
   pages={1--12},
}

\bib{CSW24}{article}{
   author={Campbell, H. Eddy A.},
   author={Shank, R. James},
   author={Wehlau, David L.},
   title={Invariants of finite orthogonal groups of plus type in odd characteristic},
   journal={To appear in  Proc. Lond. Math. Soc.},
   date={2026},
   note={ \texttt{arXiv:2407.01152}},
}

\bib{CSW25}{article}{
   author={Campbell, H. Eddy A.},
   author={Shank, R. James},
   author={Wehlau, David L.},
   title={Invariants of the finite orthogonal groups in odd dimension and even characteristic},
   journal={\texttt{arXiv:2507. 18579}},
   date={2025},
}

\bib{CW11}{book}{
   author={Campbell, H. Eddy A.},
   author={Wehlau, David L.},
   title={Modular invariant theory},
   series={Encyclopaedia of Mathematical Sciences},
   volume={139},
   publisher={Springer-Verlag, Berlin},
   date={2011},
   pages={xiv+233},
   isbn={978-3-642-17403-2},
}

\bib{CK92}{article}{
   author={Carlisle, David P.},
   author={Kropholler, Peter H.},
   title={Rational invariants of certain orthogonal and unitary groups},
   journal={Bull. London Math. Soc.},
   volume={24},
   date={1992},
   number={1},
   pages={57--60},
}

\bib{Chu01}{article}{
   author={Chu, Huah},
   title={Polynomial invariants of four-dimensional orthogonal groups},
   journal={Comm. Algebra},
   volume={29},
   date={2001},
   number={3},
   pages={1153--1164},
}

\bib{Che14}{article}{
   author={Chen, Yin},
   title={On modular invariants of a vector and a covector},
   journal={Manuscripta Math.},
   volume={144},
   date={2014},
   number={3-4},
   pages={341--348},
}

\bib{Che18}{article}{
   author={Chen, Yin},
   title={Vector invariants for two-dimensional orthogonal groups over
   finite fields},
   journal={Monatsh. Math.},
   volume={187},
   date={2018},
   number={3},
   pages={479--497},
   issn={0026-9255},
}

\bib{Che21}{article}{
   author={Chen, Yin},
   title={Relations between modular invariants of a vector and a covector in
   dimension two},
   journal={Canad. Math. Bull.},
   volume={64},
   date={2021},
   number={4},
   pages={820--827},
   issn={0008-4395},
}

\bib{CSW21}{article}{
   author={Chen, Yin},
   author={Shank, R. James},
   author={Wehlau, David L.},
   title={Modular invariants of finite gluing groups},
   journal={J. Algebra},
   volume={566},
   date={2021},
   pages={405--434},
}

\bib{CT19}{article}{
   author={Chen, Yin},
   author={Tang, Zhongming},
   title={Vector invariant fields of finite classical groups},
   journal={J. Algebra},
   volume={534},
   date={2019},
   pages={129--144},
   issn={0021-8693},
}

\bib{CW17}{article}{
   author={Chen, Yin},
   author={Wehlau, David L.},
   title={Modular invariants of a vector and a covector: a proof of a
   conjecture of Bonnaf\'{e} and Kemper},
   journal={J. Algebra},
   volume={472},
   date={2017},
   pages={195--213},
   issn={0021-8693},
}

\bib{CW19}{article}{
   author={Chen, Yin},
   author={Wehlau, David L.},
   title={On invariant fields of vectors and covectors},
   journal={J. Pure Appl. Algebra},
   volume={223},
   date={2019},
   number={5},
   pages={2246--2257},
   issn={0022-4049},
}

\bib{DK15}{book}{
   author={Derksen, Harm},
   author={Kemper, Gregor},
   title={Computational invariant theory},
   series={Encyclopaedia of Mathematical Sciences},
   volume={130},
   edition={Second enlarged edition},
   publisher={Springer, Heidelberg},
   date={2015},
   pages={xxii+366},
   isbn={978-3-662-48420-3},
   isbn={978-3-662-48422-7},
}

\bib{Dic11}{article}{
   author={Dickson, Leonard E.},
   title={A fundamental system of invariants of the general modular linear
   group with a solution of the form problem},
   journal={Trans. Amer. Math. Soc.},
   volume={12},
   date={1911},
   number={1},
   pages={75--98},
}

\bib{FF17}{article}{
   author={Ferreira, Jorge N. },
   author={Fleischmann, Peter},
   title={The invariant rings of the Sylow groups of $GU(3,q^2)$,
   $GU(4,q^2)$, $Sp(4,q)$ and $O^+(4,q)$ in the natural characteristic},
   journal={J. Symbolic Comput.},
   volume={79},
   date={2017},
   pages={356--371},
}

\bib{LM24}{article}{
   author={Lopatin, Artem},
   author={Martins, Pedro A.},
   title={Separating invariants for two-dimensional orthogonal groups over
   finite fields},
   journal={Linear Algebra Appl.},
   volume={692},
   date={2024},
   pages={71--83},
}

\bib{HZ20}{article}{
   author={Han, Ying},
   author={Zhang, Runxuan},
   title={On modular vector invariant fields},
   journal={Algebra Colloq.},
   volume={27},
   date={2020},
   number={4},
   pages={749--752},
}

\bib{NS02}{book}{
   author={Neusel, Mara D.},
   author={Smith, Larry},
   title={Invariant theory of finite groups},
   series={Mathematical Surveys and Monographs},
   volume={94},
   publisher={American Mathematical Society, Providence, RI},
   date={2002},
   pages={viii+371},
   isbn={0-8218-2916-5},
}

\bib{Ren24}{article}{
   author={Ren, Shan},
   title={Modular invariants of a vector and a covector for some elementary
   abelian $p$-groups},
   journal={Comm. Algebra},
   volume={52},
   date={2024},
   number={11},
   pages={4914--4922},
}

\bib{SW99}{article}{
   author={Shank, R. James},
   author={Wehlau, David L.},
   title={The transfer in modular invariant theory},
   journal={J. Pure Appl. Algebra},
   volume={142},
   date={1999},
   number={1},
   pages={63--77},
}

\bib{Tay92}{book}{
   author={Taylor, Donald E.},
   title={The geometry of the classical groups},
   series={Sigma Series in Pure Mathematics},
   volume={9},
   publisher={Heldermann Verlag, Berlin},
   date={1992},
}

\bib{TW06}{article}{
   author={Tang, Zhongming},
   author={Wan, Zhexian},
   title={A matrix approach to the rational invariants of certain classical
   groups over finite fields of characteristic two},
   journal={Finite Fields Appl.},
   volume={12},
   date={2006},
   number={2},
   pages={186--210},
}

\bib{Wan93}{book}{
   author={Wan, Zhexian},
   title={Geometry of classical groups over finite fields},
   publisher={Studentlitteratur, Lund; Chartwell-Bratt Ltd., Bromley},
   date={1993},
}

  \end{biblist}
\end{bibdiv}
\raggedright
\end{document}